\newtheorem{theorem}{Theorem}[section]
\newtheorem{corollary}[theorem]{Corollary}
\newtheorem{prop}[theorem]{Proposition}
\newtheorem{rem}[theorem]{\bf Remark}
\newtheorem{definition}[theorem]{Definition}
\newtheorem{example}[theorem]{\bf Example}
\theoremstyle{definition}
\newcommand{\Hmm}[1]{\leavevmode{\marginpar{\tiny%
$\hbox to 0mm{\hspace*{-0.5mm}$\leftarrow$\hss}%
\vcenter{\vrule depth 0.1mm height 0.1mm width \the\marginparwidth}%
\hbox to 0mm{\hss$\rightarrow$\hspace*{-0.5mm}}$\\\relax\raggedright #1}}}
\newcommand{\en}{{\cal E}}
\newcommand{\den}{{\dom}(\en)}
\newcommand{\dom}{\mathrm{dom}}
\newcommand{\inr}{\mathrm{Inr}}
\newcommand{\vol}{\mathrm{vol}}
\newcommand{\diam}{\mathrm{diam}}
\newcommand{\lao}{\lambda_0^D(\Omega)}
\newcommand{\lae}{\lambda_1^N}
\newcommand{\NN}{\mathbb{N}}
\newcommand{\cD}{\mathcal{D}}
\newcommand{\cP}{\mathcal{P}}
\newcommand{\cB}{\mathcal{B}}
\newcommand{\RR}{\mathbb{R}}
\newcommand{\cex}{\RR^X}
\newcommand{\inromega}{R_\Omega}
\newcommand{\supp}{\mathrm{supp}}
\newcommand{\set}[2]{\bigl\{#1\bigm|#2\bigr\}}
\newcommand{\varn}[1]{\left\|#1\right\|_{\mathcal{V}}}
\newcommand{\bigslant}[2]{{\raisebox{.2em}{$#1$}\left/\raisebox{-.2em}{$#2$}
\right.}}
\title{Universal lower bounds for Laplacians on weighted
graphs}
\author{D.~Lenz\footnote{ Mathematisches Institut, Friedrich Schiller
Universit{\"a}t Jena, 07743 Jena, Germany, daniel.lenz@uni-jena.de},
P.~Stollmann\footnote{Fakult\"{a}t f\"{u}r Mathematik,  Technische
Universit\"{a}t Chemnitz, D-09107 Chemnitz, Germany,
P.Stollmann@mathematik.tu-chemnitz.de } }
\begin{document}

\maketitle

\begin{abstract}
We discuss optimal lower bounds  for eigenvalues of Laplacians on
weighted graphs. These bounds are formulated in terms of the
geometry and, more specifically, the inradius of subsets of the
graph. In particular, we study the first non-zero eigenvalue in the
finite volume case and the first eigenvalue of the Dirichlet
Laplacian on subsets that satisfy natural geometric conditions.
\end{abstract}

\section{Introduction}
The first main objective of this article is to give an overview over
recent results that deal with lower bounds on the first non--zero
eigenvalue of Laplacians on graphs. These lower bounds involve the
inradius of subsets. This topic certainly has a long history and key
arguments have been rediscovered over and again. It seems that the
classical paper by Beurling and Deny \cite{BD-58} is the first
source for the key step, a fact that we just recently became aware of and that
has
gone unnoticed in the literature. The lower bounds we mentioned can
be seen as discrete analogs of Poincar\'e or Payne-Weinberger
inequalities \cite{PayneWeinberger-60}, the latter both being of
fundamental importance for analysis on continuum spaces.

A second point to be emphasized it that the bounds in question are
\emph{universal}. By this  we mean inequalities that do not require
additional assumptions on the graph, be it combinatorial assumptions
e.g., regularity or finiteness or  geometrical assumptions, e.g.,
curvature restrictions. This is in particular motivated by
applications in mathematical physics, where random (sub-) graphs
have seen a great deal of interest. By the very nature of randomness
these subgraphs tend to not satisfy further regularity assumptions.

Since we deal with Laplacians of general weighted graphs our
treatment includes several important classes considered in different
mathematical fields, in particular combinatorial Laplacians and
normalized Laplacians in the usual setting of combinatorial graphs.
The estimates we derive in the finite volume case are quite easy to
state and to prove and determine a lower bound that basically
depends on the diameter (or the inradius) as well as on  the volume.
Simple examples as well as more involved ones show that these
estimates are optimal. Yet, there is even  a more conceptual way of
showing optimality that was put forward in the recent \cite{LSS-TPI}
based on the following observation: in a  first step towards a
Poincar\'e inequality one estimates the variation of a function in terms
of the energy and an appropriate metric. Such an estimate was called topological
Poincar\'e inequality in the last named paper and might maybe better
be  called a \emph{metric Poincar\'e inequality}. This crucial bound
had already been proved in the above mentioned \cite{BD-58} and does not
involve
the underlying measure. It gives rise to a Poincar\'e inequality with respect
to a different metric,
sometimes called resistance metric. Varying the measure one can show that
the corresponding Poincar\'e inequality is optimal; see Section
\ref{sec-mpi} for details.

The third main objective of the present article is a generalization of
the already quite general set--up considered in \cite{LSS-arxiv}. We
briefly explain the situation here and refer to Section
\ref{sec-infinite-volume} for the full picture. A non--zero lower
bound for the first Neumann eigenvalue can in general not be
expected in the infinite volume case. However, the lowest  Dirichlet
eigenvalue $\lambda^D_0(\Omega)$ of the Laplacian on the open subset
$\Omega$ can be strictly positive, even if the volume of $\Omega$ is
infinite. The reader may think of a strip in euclidean space for a
continuum analog. Such an estimate can be deduced with the help of a
Voronoi decomposition for $\Omega$ that exhibit suitable relative
boundedness properties, of which finite inradius is the property
that replaces finite diameter and a second condition can be thought
of as a relative finite volume property. In euclidean space, a
Voronoi decomposition can easily be written down. In general
weighted graphs, however, the situation is much more complicated and
certain topological properties of the graph are necessary, see
Proposition \ref{prop-existencevoronoi} and Example \ref{ex-voronoi}
for details. Our new result, Theorem \ref{infinite-D}, now holds in
full generality, in particular in situations where Voronoi
decompositions are no longer available. The main new idea of the
proof is really simple: for any particular given function we work on
a taylor--made finite subgraph to verify the appropriate energy
estimate.

While it is not the topic of this article a  comment on Cheeger
inequalities may be in order. Cheeger inequalities provide  lower
bounds on the minimum of the spectrum of Laplacians in terms of
geometry viz the isoperimetric constant. For graphs such
inequalities were first discussed by Alon and Milman \cite{AM} (for
finite graphs) and by Dodziuk \cite{Dod} (for infinite graphs). The
work \cite{Dod} features a somewhat unsatisfactory additional
dependence on the measure. This could later be removed in the
situation of normalized Laplacians by Dodziuk and Kendall in
\cite{DK}. A satisfactory answer for general Laplacians on infinite
graphs was only recently established by Bauer, Keller and
Wojciechowski \cite{BKW}. It involves intrinsic metrics. The
corresponding bounds do not require any finiteness condition on the
inradius. Hence, they apply in situations where the bounds discussed
below do not give any useful information. On the other hand in
situations involving subsets of lattices, where our bounds apply,
they tend to be better than bounds via Cheeger estimates,  see
\cite{LSS-arxiv}, Section 6, for comparison.

\section{Set--up and main results}
In this section we present our set up and the main results. These
results connect geometric data and spectral data
of a graph, where our concept of graph is a very
general one. In the presentation of this section we take special care
to introduce the spectral data with as little technical effort as
possible. A thorough discussion of the operator and form theoretic
background is given in the last section.

\bigskip

A weighted graph is a triple  $(X,b,m)$ satisfying the following
properties:
\begin{itemize}
 \item  $X$ is an arbitrary  set, whose elements are  referred to as
\textit{vertices};
 \item $b:X\times X\to [0,\infty)$  is a symmetric  function with $b(x,x)=0$
for all $x\in
 X$.
 \item  $m:X\to (0,\infty)$ is  a  function on the vertices.
\end{itemize}
An element $(x,y)\in X\times X$ with $b(x,y)>0$ is then  called an
\textit{edge}  and $b$ is denoted as \textit{edge weight}; The
positive function $m:X\to (0,\infty)$ gives a measure on $X$ of
which we think as a volume. In particular, we define
$$\vol (\Omega):=\sum_{x\in \Omega} m(x)$$
for $\Omega \subset X$.

A sequence of vertices  $\gamma=(x_0, ..., x_k)$ is called a
\textit{path}  from $x$ to $y$ if $x_0 = x$, $x_k = y$ and
$b(x_l,x_{l+1}) >0$ for $l=0,\ldots, k-1$. Throughout our graphs
will be assumed to be \textit{connected} i.e. to allow for a path
between arbitrary vertices.

A natural distance to consider is
$$
d(x,y):=\inf\{L(\gamma)\mid \gamma\mbox{  a path from }x\mbox{  to
}y\},
$$
where  the length $L(\gamma)$  of a path $\gamma$ is given by
$$
L(\gamma):=\sum_{j=0,...,k-1}\frac{1}{b(x_j,x_{j+1})} .
$$
Setting $d(x,x)=0$ we obtain a pseudo-metric, i.e., $d$ is symmetric
and satisfies the triangle inequality. Clearly, in this generality,
$d$ need not separate the points of $X$, a fact that is of no
importance for what follows.

We call the graph \textit{geodesic} if for any $x,y\in X$ there
exists a path $\gamma$ from $x$ to $y$ with $L(\gamma) = d(x,y)$.

We denote by
$$ U_r(x):= \{y\in X \mid d(x,y) < r\} \;\mbox{ and }\; B_r(x):= \{y\in
X \mid d(x,y) \le r\}$$ the  open and closed balls of radius $r$,
respectively. The \textit{diameter} of $X$ is given by
$$\diam (X) :=\sup\{d(x,y) : x,y\in X\}.$$

The positive function $m:X\to (0,\infty)$ together with the distance
$d$ gives the geometrical data of the space. For further details on
weighted graphs and their geometry as expressed by $d$ and related
metrics we refer to the recent studies \cite{Geo,GHKLW} as well as
the surveys \cite{Ke,KL-survey} and   the upcoming monograph
\cite{KLW}.

Spectral data are given in terms of the \textit{energy} $\en$
associated to the graph  defined by the quadratic form
$$
\en(f):=
\frac12 \sum_{x,y\in
X}b(x,y)(f(x)-f(y))^2 \mbox{  for  }f\in\cex,
$$
which may assume the value $\infty$ for the time being. The underlying Hilbert
space is
$$
\ell^2(X,m):=\{ f\in\RR^X\mid  \sum_{x\in X} f(x)^2m(x)<\infty\}
$$
with inner product and norm given by
$$\langle f,g\rangle =\sum_{x\in X} f(x) g(x) m(x) \mbox{ and }
\|f\| = \sqrt{\langle f, f\rangle}$$ respectively.

The Neumann Laplacian $H=H^N(X,b,m)$ is the self-adjoint operator
associated with $\en$ on its maximal domain in $\ell^2(X,m)$. We
refer the reader to the Appendix \ref{no-fear} for a precise definition
as well as further  details concerning forms, operators and all
that. The quantity we want to estimate in the case of  finite volume
and finite diameter is  the first non-zero eigenvalue $\lae$ of $H$.
This can be written down in variational terms as
$$
\lae(X)=\inf\{\en(f)\mid f\in\ell^2 (X,m) \mbox{ with }
\en(f)<\infty, \langle f,1\rangle = 0  \mbox{ and } \|f \|=1\}.
$$
Again, we refer to the Appendix for the fact that this is
the first non-zero eigenvalue of $H$ in the case of finite volume
and finite diameter. Readers who do not want to bother with
operators should just stick to the definition above.  No operator or
spectral theoretic technicalities enter the quite elementary proof
of our first main result, Theorem \ref{lb-fin-N}, which says that
$$
\lae\ge\frac{4}{\diam(X)\vol(X)} .
$$
Even though our proof partly uses known techniques, we did not find
any reference in this generality, see the discussion in Section
\ref{sec-finite-volume}. Let us point out, that the finite volume
estimate above can be thought of as a universal Poincar\'{e} or
spectral gap inequality that holds without any further restriction
on the weights. In particular, it is valid for combinatorial and
normalized Laplacians. Moreover, it is optimal.
This supports the feeling that the distance $d$ is rather well
suited for spectral geometry in a general setting where no further
information on the graph is available.

We also  deduce a lower bound for the Dirichlet Laplacian $H_\Omega$
for $\Omega\subset X$.  The latter is again defined in terms of
forms and the relevant functions in the form domain are supposed to
vanish on the complement $D:=X\setminus\Omega$. The spectral
quantity of interest is the first eigenvalue. It can be written down
in variational terms as
$$
\lao = \inf\{\en(f)\mid f\in\ell^2 (X,m) \mbox{ with }
\en(f)<\infty, \;  f =0 \mbox{ on   $D$ and $\|f\|=1$ } \}.
$$
It is easily seen (see the Appendix \ref{no-fear} again) that
$$
\lao= \min\sigma (H_\Omega) .
$$
(In the unlikely event that $\en  (f) = \infty$ for all $f$ with
support  contained in $\Omega$,  $\lao =\infty$ in accordance with
the usual conventions.)

Theorem \ref{lb-fin-D} says
$$
\lao\ge\frac{1}{\inromega\vol(\Omega)} ;
$$
with the  \textit{inradius} $\inromega$  given by
$$\inromega:=
\sup\{ r>0\mid \mbox{ there exists }  x\in\Omega \mbox{ such that }
U_r (x)\subset\Omega\} .
$$
Clearly, if  both $\vol(\Omega)$ and $\inromega$ are finite, this
gives a positive lower bound, otherwise the usual convention
$\infty^{-1}=0$ makes the statement valid but evident.

As will be seen below  the proof of Theorem \ref{lb-fin-N} and the
proof of Theorem \ref{lb-fin-D} are very similar in nature.  In this
context it may be elucidating to point out  diameter and inradius
are strongly related concepts.  In fact,  it is not hard to see that
$$\diam (X) = \sup\{ R_{X\setminus \{x\}} : x\in X\}.$$
So one may actually think of  both Theorem \ref{lb-fin-D} and
Theorem \ref{lb-fin-N} as giving  bounds in terms of inradius.
Indeed, it is possible to even  derive the bound given in Theorem
\ref{lb-fin-N} (up to the factor $4$)  from Theorem \ref{lb-fin-D},
compare discussion at the end of Section \ref{sec-finite-volume}.

Subsequently,  in Section \ref{sec-infinite-volume}, we are able to
give a lower bound on the Dirichlet Laplacian in the infinite volume
case. Of course, this can only be hoped for if $D:=X\setminus
\Omega$ is relatively dense i.e. that there exist an $R>0$ such that
any point in $\Omega$ has distance no more than $R$ to a point of
$D$. Note that this relative denseness can equivalently be seen as
finiteness of the inradius of $\Omega$. Indeed, the inradius of
$\Omega$ is nothing but the infimum over all possible such $R\geq
0$, see \cite{LSS-arxiv} for details. So, here again, the relevant
geometry enters via the inradius.

Moreover a relative volume estimate enters our analysis measured in
terms of
$$
\vol^\sharp[r]:= \inf_{s>r}\sup\{ m(U_s(x))\mid x\in X\}
$$
and we get the following result:
$$
\lao \ge \frac{1}{\inromega\vol^\sharp[\inromega]} .
$$
The proof combines a decomposition technique from \cite{LSS-arxiv}
with a new approximation argument. In fact, the decomposition into
Voronoi cells used in \cite{LSS-arxiv} needs some  geometric
properties of the underlying weighted graphs. Specifically, it was
established in  the latter reference for locally compact geodesic
weighted graphs.

\section{Lower bounds in the finite volume
case}\label{sec-finite-volume} The proofs of both theorems in this
section go along similar lines. One main idea is an estimate that
relates the energy of a function $f$ and the maximal growth of $f$
over a certain distance. This estimate is given in the following
lemma. It has has been noted in various places and it seems the \cite{BD-58},
Remarque 3, p. 208 is the original source. We include the simple proof for
completeness reasons and later also interpret the estimate below as an
estimate between  different metrics, as done in the latter reference.

\begin{prop}[Basic proposition]\label{prop-basic}
Let $\gamma$ be a path from $x$ to $y$ and $f\in \RR^X$ with
$\mathcal{E}(f)< \infty$ be given. Then,
$$(f(x) - f(y))^2 \leq  L (\gamma) \mathcal{E} (f).$$
\end{prop}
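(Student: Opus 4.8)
The plan is to telescope the difference $f(x)-f(y)$ along the path $\gamma = (x_0,\dots,x_k)$ and then apply the Cauchy--Schwarz inequality in a way that produces the length $L(\gamma)$ as the weight. Write $f(x)-f(y) = \sum_{j=0}^{k-1}\bigl(f(x_j)-f(x_{j+1})\bigr)$. Since every consecutive pair $x_j,x_{j+1}$ on a path satisfies $b(x_j,x_{j+1})>0$, I may insert the factor $b(x_j,x_{j+1})^{1/2}$ and its reciprocal, writing each summand as $b(x_j,x_{j+1})^{-1/2}\cdot b(x_j,x_{j+1})^{1/2}\bigl(f(x_j)-f(x_{j+1})\bigr)$.

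Next I would apply Cauchy--Schwarz to this sum of products, splitting it as
\[
(f(x)-f(y))^2 \;\le\; \Bigl(\sum_{j=0}^{k-1}\frac{1}{b(x_j,x_{j+1})}\Bigr)\Bigl(\sum_{j=0}^{k-1} b(x_j,x_{j+1})\bigl(f(x_j)-f(x_{j+1})\bigr)^2\Bigr).
\]
The first factor is exactly $L(\gamma)$ by definition. For the second factor, each term $b(x_j,x_{j+1})\bigl(f(x_j)-f(x_{j+1})\bigr)^2$ appears (after the symmetrization factor $\tfrac12$ and counting the ordered pair in both directions) as part of the full double sum defining $\en(f)$; since all terms $b(u,v)(f(u)-f(v))^2$ are nonnegative, the partial sum over the edges of $\gamma$ is bounded above by $\en(f)$. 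Combining the two bounds gives the claimed inequality $(f(x)-f(y))^2 \le L(\gamma)\en(f)$.

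One point I would be slightly careful about: a path may traverse the same edge more than once, so I should note that bounding the edge-sum along $\gamma$ by $\en(f)$ uses nonnegativity of all terms rather than a bijection onto a subset of edges — repetitions only make the left side of that comparison larger, but it still stays below the full (nonnegative) sum, which is fine for an upper bound. The other routine check is that $\en(f)<\infty$ is needed only to make the right-hand side meaningful; if $\en(f)=\infty$ the statement is vacuous. I do not anticipate a genuine obstacle here — the only "hard" part is choosing the right split in Cauchy--Schwarz so that the reciprocal weights assemble into $L(\gamma)$, which is precisely why the length is defined with $1/b$ rather than $b$.
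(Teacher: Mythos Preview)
Your argument is essentially identical to the paper's: telescope $f(x)-f(y)$ along $\gamma$, insert the factors $b(x_j,x_{j+1})^{\pm 1/2}$, apply Cauchy--Schwarz, recognise the first factor as $L(\gamma)$, and bound the remaining edge-sum by $\en(f)$.

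One small correction to your final paragraph: the reasoning about repeated edges is backwards. If an edge $\{u,v\}$ is traversed $n$ times by $\gamma$, the edge-sum along $\gamma$ contains $n\cdot b(u,v)(f(u)-f(v))^2$, whereas $\en(f)$ contains that term only once; so the edge-sum along a non-simple path can genuinely \emph{exceed} $\en(f)$ (take $X=\{a,b\}$, $b(a,b)=1$, $f(a)=0$, $f(b)=1$, $\gamma=(a,b,a,b)$: the edge-sum is $3$ while $\en(f)=1$). The clean fix is to first pass to a simple subpath: whenever $x_i=x_j$ with $i<j$, delete the segment $(x_{i+1},\dots,x_j)$; iterating gives a simple path $\gamma'$ from $x$ to $y$ with $L(\gamma')\le L(\gamma)$ and pairwise distinct edges, for which the bound of the edge-sum by $\en(f)$ is legitimate, and the inequality for $\gamma$ then follows from the one for $\gamma'$. (The paper's own proof glosses over this point too; your instinct to flag it was sound, only the resolution needed adjusting.)
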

\begin{proof}  Let $f$, $\gamma=(x_0, ... , x_k)$ be as above. Then,
 \begin{eqnarray*}
  \left(f(x) - f(y)\right)^2&=&\left(f(x_0)-f(x_k)\right)^2\\
&=&\left(\sum_{j=0}^{k-1}\sqrt{b(x_j,x_{j+1})}(f(x_j)-f(x_{j+1}))\frac{1}{\sqrt{
b(x_j ,x_{j+1})}}\right)^2\\
  &\le&
\sum_{j=0}^{k-1}b(x_j,x_{j+1})\left(f(x_j)-f(x_{j+1})\right)^2\sum_{j=0}^{k-1}
\frac{1}{
b(x_j,x_{j+1})}\\
  &\le& L(\gamma)\en(f).
  \end{eqnarray*}
\end{proof}

Now, we can directly derive the result on the Dirichlet case.

\begin{theorem}\label{lb-fin-D}
Let $(X,b,m)$ be as above.
Let  a non-empty $\Omega\subsetneq X$  be given and assume
$\vol(\Omega)<\infty$ and
$\inr(\Omega)<\infty$. Then
 $$
 \lao\ge \frac{1}{\inromega\vol(\Omega)} .
 $$
\end{theorem}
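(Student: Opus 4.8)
The plan is to bound $\lao$ from below by testing with an arbitrary admissible $f$, i.e.\ $f \in \ell^2(X,m)$ with $\en(f) < \infty$, $f = 0$ on $D = X \setminus \Omega$ and $\|f\| = 1$, and showing $\en(f) \ge \bigl(\inromega\vol(\Omega)\bigr)^{-1}$. The key point is that such an $f$ cannot be uniformly large on $\Omega$ because it is forced to vanish on $D$, and $D$ is within distance $\inromega$ of every point of $\Omega$. More precisely, fix $\varepsilon > 0$ and set $r := \inromega + \varepsilon$; by definition of the inradius, for every $x \in \Omega$ the ball $U_r(x)$ is \emph{not} contained in $\Omega$, so it meets $D$, and hence there is a point $z \in D$ with $d(x,z) < r$. (If the graph is geodesic one can take an actual geodesic path realizing a length $< r$; in general, one simply chooses a path $\gamma$ from $x$ to $z$ with $L(\gamma) < r$ from the definition of $d$.)

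First I would apply the Basic Proposition to this path $\gamma$: since $f(z) = 0$, it yields
$$
f(x)^2 = (f(x) - f(z))^2 \le L(\gamma)\,\en(f) < r\,\en(f)
$$
for every $x \in \Omega$. Second, I would integrate this pointwise bound against the measure $m$ over $\Omega$: summing over $x \in \Omega$ and using that $f$ vanishes off $\Omega$,
$$
1 = \|f\|^2 = \sum_{x \in \Omega} f(x)^2 m(x) \le r\,\en(f) \sum_{x \in \Omega} m(x) = r\,\en(f)\,\vol(\Omega).
$$
Rearranging gives $\en(f) \ge \bigl(r\,\vol(\Omega)\bigr)^{-1} = \bigl((\inromega+\varepsilon)\vol(\Omega)\bigr)^{-1}$. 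Third, I would let $\varepsilon \downarrow 0$ to obtain $\en(f) \ge \bigl(\inromega\vol(\Omega)\bigr)^{-1}$, and then take the infimum over all admissible $f$ to conclude $\lao \ge \bigl(\inromega\vol(\Omega)\bigr)^{-1}$.

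There is no serious obstacle here; the argument is essentially a one-shot application of the Basic Proposition followed by summation. The only points requiring a little care are: (i) justifying the choice of the point $z \in D$ and the path $\gamma$ of length $< \inromega + \varepsilon$ directly from the definitions of $\inromega$ and of $d$ as an infimum of path lengths, which is why the $\varepsilon$ is introduced rather than working with $\inromega$ itself; (ii) handling the degenerate cases — if $\vol(\Omega) = \infty$ or $\inromega = \infty$ the claimed bound is $0$ and there is nothing to prove, and if no $f$ with finite energy is supported in $\Omega$ then $\lao = \infty$ by convention — both of which are already flagged in the statement and its surrounding discussion. Finally, one should note that $f \in \ell^2(X,m)$ together with $\vol(\Omega) < \infty$ and $f$ supported in $\Omega$ is automatic once $f$ is admissible, so no separate integrability check beyond the hypotheses is needed.
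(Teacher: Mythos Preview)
Your proof is correct and follows essentially the same route as the paper: pick $R>\inromega$ (your $r=\inromega+\varepsilon$), use the definition of the inradius to find for each $x\in\Omega$ a point of $D$ within distance $R$ and a path of length at most $R$ to it, apply the Basic Proposition to get $f(x)^2\le R\,\en(f)$, sum over $\Omega$, and let $R\downarrow\inromega$. Your added remarks on why one needs the $\varepsilon$-slack and on the degenerate cases are accurate and do not diverge from the paper's argument.
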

\begin{proof}[Proof of Theorem \ref{lb-fin-D}]
Let $R>\inromega$. Consider $x\in \Omega$ and $f\in \RR^X$ with
$\mathcal{E}(f)<\infty$ and  $f =0$ on $D=X\setminus\Omega$. By
definition of the inradius there is $x_0\in U_R(x)\setminus\Omega$.
In particular, there is a path $\gamma=(x_0,...,x_k)$ from $x_0$ to
$x=x_k$ of length at most $R$ and $f(x_0)=0$. Using the above
proposition, we get
$$
f(x)^2\le R\en(f) .
$$
Summing over $x\in\Omega$ and using that $R>\inromega$ was arbitrary we
conclude that
$$
\| f \|_{\ell^2}^2\le \inromega \vol(\Omega)\en(f) ,
$$
and hence, the assertion of the theorem follows.
\end{proof}

The proof of Theorem \ref{lb-fin-N} requires one more ingredient
borrowed from \cite{LSS-TPI}, where it was stated in the special
situation at hand:
\begin{prop}
Let $(Y,\cB,\mu)$ be a  finite measure space. Then,
for any bounded and measurable $f:Y\to\RR$ with $f\perp 1$:
$$
\| f \|_{2}^2\le \frac14 \sup_{x,y\in
Y}\left(f(x)-f(y)\right)^2 \mu(X) .
$$
\end{prop}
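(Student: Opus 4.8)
The plan is to exploit the mean--zero condition $\langle f,1\rangle =0$ in the classical way: among all constant shifts $f-c$ the $\ell^2$--norm is smallest precisely at $c=0$, so we are free to replace $f$ by $f-c$ for the most convenient choice of $c$ before estimating, and the convenient choice is the midpoint of the range of $f$.

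First I would record the elementary identity valid for every $c\in\RR$:
$$
\int_Y (f-c)^2\,d\mu = \int_Y f^2\,d\mu - 2c\int_Y f\,d\mu + c^2\mu(Y) = \|f\|_2^2 + c^2\mu(Y),
$$
where we used $\int_Y f\,d\mu = \langle f,1\rangle = 0$ together with the finiteness of $\mu$ (so that the constant function $1$ is integrable). In particular $\|f\|_2^2 \le \int_Y (f-c)^2\,d\mu$ for every $c$. Next, since $f$ is bounded, set $M:=\sup_Y f$ and $m:=\inf_Y f$ and take the midpoint $c:=\tfrac12(M+m)$. Then $|f(x)-c|\le \tfrac12(M-m)$ for every $x\in Y$, whence
$$
\|f\|_2^2 \le \int_Y (f-c)^2\,d\mu \le \tfrac14(M-m)^2\,\mu(Y).
$$
Finally I would observe $(M-m)^2 = \sup_{x,y\in Y}(f(x)-f(y))^2$, which yields the claimed estimate.

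I do not expect a genuine obstacle here; the only point requiring a word of care is the measure--theoretic status of $M$ and $m$. If one prefers to read ``bounded'' as ``essentially bounded'', one replaces $\sup$ and $\inf$ by the essential supremum and infimum, the bound $|f-c|\le\tfrac12(M-m)$ then holding only $\mu$--almost everywhere, which is all that is needed for the integral estimate above; and the literal supremum appearing in the statement only dominates the essential one. Everything else is contained in the two displayed computations.
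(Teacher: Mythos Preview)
Your argument is correct. Note that the paper itself does not supply a proof of this proposition; it is quoted from \cite{LSS-TPI}. However, the identity displayed later in Section~\ref{sec-mpi},
\[
\varn{f}=2\inf_{t\in\RR}\|f-t\cdot 1\|_\infty,
\]
is exactly the observation that the midpoint $c=\tfrac12(\sup f+\inf f)$ minimises $\|f-c\|_\infty$ with value $\tfrac12\varn{f}$, and combining this with the fact that the mean--zero condition makes $c=0$ the $\ell^2$--minimiser is precisely your argument. So your proof is both correct and in line with the ideas the paper uses elsewhere.
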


\begin{theorem}\label{lb-fin-N}
Let $(X,b,m)$ be as above. Assume  $\vol(X)<\infty$ and
$\diam(X)<\infty$. Then
 $$
 \lae(X)\ge \frac{4}{\diam(X) \vol(X)}.
 $$
\end{theorem}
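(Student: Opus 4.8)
The plan is to mimic the proof of Theorem \ref{lb-fin-D}, now chaining together the two Propositions of this section. Fix $f\in\ell^2(X,m)$ with $\en(f)<\infty$, $\langle f,1\rangle=0$ and $\|f\|=1$; by the variational characterization of $\lae(X)$ it suffices to establish $\en(f)\ge 4/(\diam(X)\vol(X))$ and then pass to the infimum over all such $f$ (if no such $f$ of finite energy exists, the asserted inequality holds trivially).

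First I would rephrase Proposition \ref{prop-basic} in terms of the metric $d$: taking the infimum over all paths $\gamma$ from $x$ to $y$ gives, for all $x,y\in X$,
$$(f(x)-f(y))^2\le d(x,y)\,\en(f)\le \diam(X)\,\en(f).$$
Since $\diam(X)<\infty$ and $\en(f)<\infty$, the oscillation of $f$ is bounded; as $f$ is finite at every vertex, this forces $f$ to be a bounded function on $X$. This step — going from finite energy to a genuinely bounded function — is the only point that needs a small argument, and it is precisely what licenses the use of the second Proposition.

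Next I would apply the second Proposition with $Y=X$ and $\mu=m$, which is a finite measure space because $\vol(X)<\infty$, to the bounded function $f$, which satisfies $f\perp 1$ in $\ell^2(X,m)$ by hypothesis. This yields
$$\|f\|^2\le \frac14\,\sup_{x,y\in X}(f(x)-f(y))^2\,\vol(X).$$
Combining this with the oscillation bound from the previous paragraph gives
$$1=\|f\|^2\le \frac14\,\diam(X)\,\en(f)\,\vol(X),$$
hence $\en(f)\ge 4/(\diam(X)\vol(X))$. Taking the infimum over all admissible $f$ then completes the proof.

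The only real obstacle is the boundedness issue flagged above, and it dissolves immediately once Proposition \ref{prop-basic} is combined with the finiteness of the diameter. The extra factor $4$ compared with Theorem \ref{lb-fin-D} is entirely accounted for by the mean-zero normalization, which is where the second Proposition (and its constant $\frac14$) enters.
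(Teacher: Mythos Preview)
Your proof is correct and follows exactly the approach the paper intends: combine Proposition~\ref{prop-basic} with the preceding variance proposition, which is precisely what the paper's one-line proof says. Your explicit observation that finite diameter together with $\en(f)<\infty$ forces $f$ to be bounded is a useful detail, since boundedness is a hypothesis of the second proposition.
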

\begin{proof} This directly follows  from Proposition \ref{prop-basic} and the
previous proposition.
\end{proof}

\begin{rem} (a) For finite combinatorial graphs, the lower bound is
a familiar bound and our proof follows known lines, compare Lemma
1.9 in \cite{Chu} and Lemma 2.4 in \cite{BCG} for related estimates as well
as \cite{Mohar}, estimate (4.1) in Theorem 4.2, p. 62, where it is attributed to
McKay.

(b)  Theorem 4.1 from \cite{BCG} gives that the lower bound is
optimal up to constants, i.e., it is achieved, up to constants by
balls in certain graphs.  As announced in the introduction, we will
present another approach to optimality in terms of variation of the
measure $m$ in Section \ref{sec-mpi}.
\end{rem}

As it is instructive we conclude this section by showing how a
(slightly  weaker) version of Theorem \ref{lb-fin-N} can easily be
derived from Theorem \ref{lb-fin-D}: We consider the situation of
Theorem \ref{lb-fin-N} and let $f\in \ell^2 (X,m)$ with
$\en(f)<\infty$, $\|f\| =1$ and $f\perp 1$ be given. Setting
$f_+:=\max\{f,0\}$ and $f_-:=\max\{-f,0\}$ we obtain the
decomposition $f = f_+ - f_-$. Clearly $\Omega_+:=\{x\in X : f(x)
>0\}$ and $\Omega_- :=\{x\in X : f(x) < 0\}$ are disjoint with $f_+$ vanishing
outside
$\Omega_+$ and $f_-$ vanishing outside of $\Omega_-$.  This easily
gives
\begin{eqnarray*}
\en(f_+,f_-) &= &\frac{1}{2}\sum_{x,y\in X} b(x,y) (f_+ (x) - f_+
(y)) (f_-(x) - f_-(y))\\
& =& -\sum_{x,y\in X} b(x,y) f_+ (x) f_-(y) \leq 0.
\end{eqnarray*}
Now, a direct computation involving Theorem \ref{infinite-D} gives
\begin{eqnarray*}
\en(f) &=& \en (f_+) + 2 \en (f_+,f_-) + \en(f_-)\\
&\geq & \en (f_+) + \en (f_-)\\
(\mbox{Theorem \ref{lb-fin-D}})\;\: &\geq
&\frac{\|f_+\|^2}{\vol(\Omega_+) R_{\Omega_+}} +
\frac{\|f_-\|^2}{\vol(\Omega_-) R_{\Omega_-}}\\
&\geq &
\frac{ \|f_+\|^2 + \|f_-\|^2}{\vol(X) \diam (X)}\\
&=& \frac{1}{\vol(X) \diam (X)}.
\end{eqnarray*}
Here, we used the obvious bounds $R_{\Omega_+}, R_{\Omega_-}\leq
\diam (X)$ and $\vol(\Omega_+),\vol(\Omega_-)\leq \vol(X)$ in the
penultimate step

\section{Lower bounds for the Dirichlet Laplacian in the infinite volume
case}\label{sec-infinite-volume} In this section we consider
$H_\Omega$ again, this time without assuming that
$\vol(\Omega)<\infty$. Clearly, the estimate from Theorem
\ref{lb-fin-D} breaks down in this case. The basic idea, already
employed in \cite{LSS-arxiv}, is to decompose the large, infinite
graph $X$ into finite volume pieces on which the above estimate can
be used. The resulting energy estimates  can be summed up to give
the Poincar\'{e} type inequality. The way this was implemented in
\cite{LSS-arxiv} required strong assumptions on the underlying
geometry. Here we show how - based on the result of \cite{LSS-arxiv}
one can actually get rid of all these assumptions.

We start with a discussion of the relevant  decompositions. These
were  introduced in \cite{LSS-arxiv} and had already been used
in \cite{SSV-14} in a more restrictive set--up.
\begin{definition}\label{def-vor}
Let $(X,b,m)$ be as above and  $D\subset X$ non-empty. A
\emph{Voronoi decomposition} of $X$ with centers from $D$ is a
pairwise disjoint family $(V_p)_{p\in D}$ such that  following
conditions hold:
 \begin{enumerate}
  \item[(V1)] For each $p\in D$ the point $p$ belongs to  $ V_p$  and
  for all $x\in V_p$ there exists a path $\gamma$ from $p$ to $x$ that lies
   in $V_p$ and satisfies  $L(\gamma) = d(p,x)$.
  \item[(V2)] For each $p\in D$ and for all  $x\in V_p$ the inequality
  $d(p,x) \leq  d(q,x)$ holds  for any $q\in D$.
  \item[(V3)] $\bigcup_{p\in D}V_p=X$.
 \end{enumerate}
\end{definition}

Given a Voronoi decomposition with centers from $D$  one can obtain
a lower bound on the values of $Q(f)$ for $f$ vanishing on $D$  as
follows: Let $D\subset X$ be given and let $V_p$, $p\in D$, be an
Voronoi decomposition with centers from $D$. Assume that there
exists for each $p\in D$ a $c_p
>0$ with
$$ \frac{1}{2} \sum_{x,y\in V_p} b(x,y) (f(x) - f(y))^2 \geq c_p \sum_{x\in V_p}
f(x)^2 m(x)$$ for all $f$ with $f(p) =0$. Then, a direct summation
gives
$$\en(f) \geq (\inf_{p\in D} c_p ) \|f\|^2$$
for all $f$ which vanish on $D$ (compare  \cite{LSS-arxiv} as well).

We now turn to existence of Voronoi decompositions. This was shown
in \cite{LSS-arxiv} under the additional strong  geometric condition
of compactness of balls and this condition was shown to be
necessary. Here is the precise result.

\begin{prop}\label{prop-existencevoronoi}
Let $(X,b,m)$ be a connected graph such that all balls $B_r (x)$,
$x\in X$ and $r>0$,  are finite.  Assume that  $D\subset X$ is
non-empty and let $\Omega=X\setminus D$. Then there exists a Voronoi decomposition with centers
from $D$. Moreover, whenever $R=\inromega$ is finite then any
Voronoi decomposition $(V_p)_{p\in D}$ of $X$ with centers from $D$
has the property that $V_p\subset B_R(p)$ for all  $p\in D$.
\end{prop}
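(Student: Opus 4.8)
The plan is to treat the two assertions separately, in reverse order; the inclusion $V_p\subseteq B_R(p)$ is soft and uses neither the construction nor finiteness of balls, so I would dispatch it first. Fix an arbitrary Voronoi decomposition $(V_p)_{p\in D}$ and a point $x\in V_p$. Condition (V2) gives $d(p,x)\le d(q,x)$ for every $q\in D$, hence $d(p,x)\le d(x,D)$, and since $p\in D$ this is an equality: $d(p,x)=d(x,D)$. As recalled in the introduction (see also \cite{LSS-arxiv}), $R=\inromega=\sup_{y\in\Omega}d(y,D)$, so $d(p,x)=d(x,D)\le R$, i.e. $x\in B_R(p)$. That settles the ``moreover'' part, and from here on I focus on existence.

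For existence, the first thing I would record is that finiteness of balls forces the graph to be geodesic and $d$ to be realised between any two points: given a path $\gamma_0$ from $u$ to $v$ of length $\Lambda$, every $u$--$v$ path of length $\le\Lambda$ lies inside the finite ball $B_\Lambda(u)$, where the finitely many edge lengths $1/b(\cdot,\cdot)$ are bounded below by some $\delta>0$, so such a path has at most $\Lambda/\delta$ edges; only finitely many such paths exist and $d(u,v)$ is attained. In particular, using connectedness, $\rho(x):=d(x,D)$ is finite for every $x\in X$ and is realised by an honest geodesic from $x$ to some point of the finite, nonempty set $N(x):=\{p\in D:d(p,x)=\rho(x)\}$ of nearest centres; note $\rho(x)=0$ exactly when $x\in D$.

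The heart of the matter is a ``parent'' construction. For $x\in X$ let $\nu(x)$ be the least number of edges of a geodesic from $x$ to a point of $N(x)$; this is a finite nonnegative integer, and $\nu(x)=0$ iff $x\in D$. The key lemma I would prove is: if $\nu(x)=n\ge1$, then $x$ has a neighbour $w$ (meaning $b(x,w)>0$) with $\rho(w)=\rho(x)-1/b(x,w)$ and $\nu(w)=n-1$. Indeed, take a geodesic $(p=z_0,\dots,z_n=x)$ with $p\in N(x)$ having exactly $n$ edges and put $w=z_{n-1}$; then $\rho(x)=d(p,x)=d(p,w)+1/b(w,x)$, and the triangle inequality shows no point of $D$ is closer to $w$ than $p$ (otherwise it would be closer to $x$ than $\rho(x)$), whence $p\in N(w)$ and $\rho(w)=\rho(x)-1/b(w,x)$; the truncation $(z_0,\dots,z_{n-1})$ shows $\nu(w)\le n-1$, while concatenating any strictly shorter geodesic from a nearest centre of $w$ with the edge $(w,x)$ would produce a geodesic from a nearest centre of $x$ to $x$ with fewer than $n$ edges, contradicting $\nu(x)=n$; hence $\nu(w)=n-1$. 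This bookkeeping is the step I expect to be the main obstacle, and it is precisely here that finiteness of balls is indispensable: without geodesics, condition (V1) could not even be satisfied.

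Granting the lemma, I would choose for each $x\in\Omega$ such a neighbour $\pi(x)$, and let $c(x)\in D$ be the vertex reached by iterating $\pi$ from $x$ (with $c(x)=x$ for $x\in D$); since $\nu$ drops by exactly $1$ at each step, this iteration terminates in $D$ after $\nu(x)$ steps, so $c$ is well defined. Set $V_p:=c^{-1}(\{p\})$ for $p\in D$. The cells are pairwise disjoint and cover $X$, giving (V3). For $x\in V_p$, let $\gamma_x$ be the reversal of the chain $x,\pi(x),\pi^2(x),\dots,p$; its length telescopes, via the decrement relation, to $\rho(x)-\rho(p)=\rho(x)$, so $d(p,x)\le\rho(x)=d(x,D)\le d(p,x)$ forces $\gamma_x$ to be a geodesic with $L(\gamma_x)=d(p,x)=\rho(x)$. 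This gives (V2), since $\rho(x)\le d(q,x)$ for all $q\in D$. Finally, every vertex $z$ of $\gamma_x$ has $c(z)=c(x)=p$, because the iteration of $\pi$ from $z$ is a tail of that from $x$; thus $\gamma_x$ lies in $V_p$, and $c(p)=p$ shows $p\in V_p$, completing (V1). Hence $(V_p)_{p\in D}$ is the desired Voronoi decomposition.
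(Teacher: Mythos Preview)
The paper does not actually prove this proposition; it merely quotes it as a result established in \cite{LSS-arxiv}. Your write-up therefore cannot be compared line by line against a proof in the paper, but it stands on its own and is correct.

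A few remarks on the approach. Your treatment of the ``moreover'' clause is clean and uses exactly the identification $\inromega=\sup_{y\in\Omega}d(y,D)$ that the paper records in Section~2; the only small point worth making explicit is that any $x\in V_p$ with $x\neq p$ must lie in $\Omega$ (since $x\in D$ would force $x\in V_x$ and the cells are disjoint), after which $d(p,x)=d(x,D)\le R$ follows as you wrote.

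For existence, your argument via the parent map $\pi$ and the integer-valued ``edge depth'' $\nu$ is the natural one and matches in spirit what one finds in \cite{LSS-arxiv}. The crucial uses of finiteness of balls are exactly where you put them: (i) geodesics exist between any two vertices, because all competing paths live in a finite ball and hence are finite in number; and (ii) the set $N(x)$ of nearest centres is nonempty, because $D\cap B_{\rho(x)+\varepsilon}(x)$ is finite and nonempty. Your key lemma is correctly argued: the delicate half is $\nu(w)\ge n-1$, and your concatenation argument handles it, since the concatenated walk from $q\in N(w)$ through $w$ to $x$ has length $\rho(w)+1/b(w,x)=\rho(x)$, hence realises $d(q,x)=\rho(x)$ and exhibits $q\in N(x)$. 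The telescoping length computation and the observation that every vertex on the $\pi$-chain shares the same value of $c$ then give (V1)--(V3) as you state.
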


Let us give a simple example of a geodesic weighted graph that does
not allow a Voronoi decomposition:

\begin{example}\label{ex-voronoi}
Let $X:=\left(\NN\times \{ 0\}\right)\cup \{ (1,1)\}$ with weight
$b((n,0);(n+1,0))=2$ for $n\in\NN$,
$b((n,0),(1,1))=1+\frac{1}{n}$ for $n\in\NN$ and $b(x,y)=0$ else. Since none of
the points from
$D:=\NN\times \{ 0\}$ is closest to the point $(1,1)$, there is no Voronoi
decomposition of $X$ with
centers in $D$ in the sense of \cite{LSS-arxiv} in this case.
\end{example}

In the general  setting discussed in the present article  finiteness
of balls does not hold in general. So we can not directly appeal to
the proposition to get a Voronoi decomposition (and subsequent lower
bounds). Our method to circumvent this extra complication is quite
easy. For given $f$ vanishing on $D$, for which we want to estimate
the $\ell^2$--norm by the energy, we consider a finite subgraph of
$(X,b,m)$ on which a Voronoi decomposition exists by the result from
\cite{LSS-arxiv}. We get the following estimate and underline the
fact, that we have assumed no geometric restrictions apart from
connectedness!

\begin{theorem}\label{infinite-D}
Let $(X,b,m)$ be a connected weighted graph and $\Omega\subsetneq X$ such that
$\inromega<\infty$ and $\vol^\sharp[\inromega]<\infty$. Then
 $$
 \lao\ge \frac{1}{\inromega\cdot\vol^\sharp[\inromega]} .
 $$
\end{theorem}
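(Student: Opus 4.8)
The plan is to reduce the infinite-volume statement to the finite-volume estimate of Theorem \ref{lb-fin-D} together with the Voronoi machinery of Proposition \ref{prop-existencevoronoi}, using the ``tailor-made finite subgraph'' idea mentioned in the introduction. Fix $R > \inromega$ and fix a test function $f\in\ell^2(X,m)$ with $\en(f)<\infty$, $\|f\|=1$ and $f=0$ on $D:=X\setminus\Omega$; it suffices to show $\en(f)\ge \frac{1}{R\cdot \vol^\sharp[\inromega]}\,\|f\|^2$ and then let $R\downarrow\inromega$. The point is that, although $X$ itself need not have finite balls (so Proposition \ref{prop-existencevoronoi} does not apply to $X$), everything we need to control $f$ lives on a countable, and in fact suitably finite, piece of the graph.

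First I would construct the finite subgraph. Since $\en(f)<\infty$ and $\|f\|<\infty$, the set $S$ of vertices $x$ where $f(x)\neq 0$ is countable, and $S\subset\Omega$. For each such $x$, since $R>\inromega$ there is a point $p(x)\in D$ with $d(x,p(x))\le R$, realized (after passing to a slightly larger radius, or by an approximation argument on path lengths) by a path from $x$ to $p(x)$ of length $\le R$; let $D_0\subset D$ collect all these centers and let $X_0$ be the union of $S$, the set $D_0$, and all vertices lying on the chosen paths, together with all edges of $X$ internal to $X_0$. Then $(X_0,b,m)$ is a connected (or finite-component) weighted graph, $\Omega_0:=X_0\cap\Omega$ has $R_{\Omega_0}\le R<\infty$, and — here one must be a little careful — one should arrange $X_0$ to have finite balls so that Proposition \ref{prop-existencevoronoi} applies; this can be done by taking $X_0$ countable and then, for the given $f$, truncating further to a finite set $X_1\supset\{x:f(x)\neq 0,\ |f(x)|>\varepsilon\}$ plus the relevant centers and geodesic paths, handling the $\varepsilon\to 0$ limit at the end. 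On $X_1$ all balls are finite, so a Voronoi decomposition $(V_p)_{p\in D_1}$ with centers from $D_1:=X_1\cap D$ exists, and by the second part of Proposition \ref{prop-existencevoronoi} each $V_p\subset B_R(p)$.

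Next I would run the summation argument on $X_1$. On each cell $V_p$, property (V1) gives for every $x\in V_p$ a path inside $V_p$ from $p$ to $x$ of length $d(p,x)\le R$, so the Basic Proposition \ref{prop-basic} (applied with the energy of $f$ restricted to $V_p$, which only decreases energy) yields $f(x)^2\le R\cdot \frac12\sum_{u,v\in V_p}b(u,v)(f(u)-f(v))^2$, using $f(p)=0$. Summing $f(x)^2 m(x)$ over $x\in V_p$ gives $\sum_{x\in V_p}f(x)^2 m(x)\le R\,m(V_p)\cdot\frac12\sum_{u,v\in V_p}b(u,v)(f(u)-f(v))^2$. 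Now since $V_p\subset B_R(p)\subset U_s(p)$ for every $s>R$, and $R$ can be taken arbitrarily close to $\inromega$, we get $m(V_p)\le \vol^\sharp[\inromega]$ in the limit; being slightly careful with the order of quantifiers, for each fixed $R>\inromega$ one bounds $m(V_p)$ by $\sup_{s>R}\sup_x m(U_s(x))$ and uses that this is $\ge \vol^\sharp[\inromega]$ only after the limit, so it is cleanest to carry the bound $m(V_p)\le \sup\{m(U_s(p)):s>\inromega\}$ throughout, i.e.\ to work with $\vol^\sharp[\inromega]$ directly via $R\downarrow\inromega$. Summing over $p\in D_1$ and using (V3) (the cells cover $X_1\supset$ the support of the truncated $f$) gives the energy estimate for the truncation of $f$; letting $\varepsilon\to 0$ (monotone/dominated convergence, using $\|f\|,\en(f)<\infty$) and then $R\downarrow\inromega$ yields $\|f\|^2\le \inromega\cdot\vol^\sharp[\inromega]\cdot\en(f)$, which is the claim.

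The main obstacle is the bookkeeping in the reduction to a \emph{finite} graph with finite balls: one has to choose $X_1$ large enough that (i) it carries enough of the mass and energy of $f$, (ii) every relevant vertex of $\Omega\cap X_1$ still has a nearby center in $D\cap X_1$ joined by a short path lying in $X_1$ (so that $R_{\Omega_1}\le R$ survives the truncation), yet small enough that balls in $X_1$ are finite and Proposition \ref{prop-existencevoronoi} applies. A clean way is: enumerate the support of $f$, and at stage $n$ add the $n$-th support vertex, one associated center in $D$, and one geodesic-length-$\le R$ path between them; the resulting increasing union $X_0=\bigcup X_n$ is countable with $R_{\Omega_0}\le R$, and one then applies the argument to finite initial segments $X_n$ and passes to the limit. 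One should also remember that replacing $X$ by a subgraph only decreases $\en$ and only increases (or preserves) the relevant $\vol^\sharp$-type suprema in the right direction, and that the limit $R\downarrow\inromega$ is where $\vol^\sharp[\inromega]=\inf_{s>\inromega}\sup_x m(U_s(x))$ enters, matching the statement exactly.
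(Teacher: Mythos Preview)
Your approach is essentially the paper's: build a finite subgraph tailored to $f$ (capture $1-\varepsilon$ of its $\ell^2$-mass, add finitely many centers from $D$ and short connecting paths so that the inradius in the subgraph stays below $R$), apply Proposition \ref{prop-existencevoronoi} there, run the cell-by-cell estimate via Proposition \ref{prop-basic}/Theorem \ref{lb-fin-D}, sum over cells, and then let $\varepsilon\to 0$ and $R\downarrow\inromega$. Your detour through a countable intermediate graph $X_0$ before truncating to finite $X_1$ is correct but superfluous---the paper starts directly from a finite $X_f^0$ carrying $1-\varepsilon$ of the mass---and your concern about the order of the $R$ and $s$ limits in recovering $\vol^\sharp[\inromega]$ is legitimate but easily handled (for each $s>\inromega$ choose $\inromega<R<s$, then let $R\downarrow\inromega$, then take the infimum over $s$).
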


\begin{proof}
Let $\varepsilon >0$, $R>\inromega$ and $f\in\ell^2(X)$ with $\| f\|=1$. We
have to show that
$$
1-\varepsilon \le R\cdot\sup\{ m(U_R(x))\mid x\in X\}\cdot \en(f)\qquad(*) .
$$
To this end, first note that there is a finite subset $X^0_f\subset X$ such that
$$
1-\varepsilon\le \| f\rvert_{X^0_f}\|^2 .
$$
We now enlarge $X^0_f$ suitably. First we add finitely many points to get a
subset $X^1_f\supset X^0_f$ so
that $(X^1_f,b\rvert_{X^1_f\times X^1_f})$ is connected. (This is possible as
$(X,b)$ is connected and so
there is a finite path connecting each of the finitely many pairs $(x,y)\in
X^0_f\times X^0_f$).

By assumption on $\Omega$ we know that
$$
X\subset \bigcup_{p\in D}U_R(p) .
$$
Therefore, we find a finite subset $D^0\subset D$ so that
$$
X^1_f\subset \bigcup_{p\in D^0}U_R(p) .
$$
By adding the points of $D^0$ as well as the points of finitely many suitably
chosen paths, we get
a finite subset $X^1_f\cup D^0\subset Y$ such that
$$
Y\subset \bigcup_{p\in D^0}U_R^Y(p) ,
$$
where the superscript $Y$ indicates that we are concerned with balls
in the induced subgraph $(Y,b\rvert_{Y\times Y}, m\rvert_Y)$. This
latter graph is finite, so in particular it satisfies the
assumptions of Proposition \ref{prop-existencevoronoi}, and  we get
a Voronoi decomposition with centers from $Y$. Now on each  $V_p$,
$p\in Y$,  of the Voronoi decomposition we can appeal to Theorem
\ref{lb-fin-D} to get a lower bound for the form on $V_p$, $p\in Y$.
Given this we can now proceed as discussed following the definition
 of Voronoi decomposition (with $Y$ instead of $D$)
 to obtain
$$ 1-\varepsilon\le
\| f\rvert_Y\|^2\le R\cdot\sup\{ m(U^Y_R)\mid y\in Y\}\cdot\en^Y(f)
.
$$
This implies $(*)$ which finishes the proof.
\end{proof}
\section{Metric Poincar\'e inequalities and optimality of lower
bounds}\label{sec-mpi} In this section we put our results in the
context of Poincar\'{e} inequalities on graphs, see \cite{KS,HKSW}
for related recent results as well. This will be used to discuss
optimality.

\bigskip

We say the a pseudo-metric $p$ on $(X,b)$ \textit{satisfies a metric
Poincar\'e inequality}, provided
$$
\left( f(x)-f(y)\right)^2\le p(x,y)\en(f)\qquad\qquad\mbox{(MPI)}
$$
holds for all $f\in\RR^X$, $x,y\in X$. Clearly, our basic Proposition
\ref{prop-basic} says that
$d$  satisfies a metric Poincar\'e inequality.

The best constant $r(x,y)$ in (MPI) can easily be seen to satisfy
$r(x,y)=\rho(x,y)^2$, where
$$
\rho(x,y)=\sup\{ f(x)-f(y)\mid \en(f)\le 1\} .
$$
This latter  metric also goes back to \cite{BD-58}, where it was
called \emph{distance extr\'emale} and has later been rediscovered
in different contexts, e.g., in \cite{Davies-93}. While the validity
of the triangle inequality is evident for $\rho$, also $r$ itself
defines a pseudometric, a fact that is not so obvious. For details
(and further references) we refer to \cite{LSS-TPI}, in particular
Prop. 2.2.

We define the following seminorm on $\ell^\infty(X)$:
$$
\varn{f}:=\sup f -\inf f\mbox{  for  }f\in\ell^\infty(X).
$$
Moreover, we define
$$
\cD:=\set{f\in\RR^X}{\en(f)<\infty}
$$
and note the following equivalence:
\begin{prop}
 Let $(X,b)$ be a graph. Then the following statements are equivalent:
 \begin{itemize}
  \item[\rm{(i)}] A global variation norm Poincar\'e inequality holds, i.e.,
there is $c\ge 0$ such that
  for all $f\in\cD$:
  $$\varn{f}^2\le c \en(f)\qquad\qquad \mbox{{\rm (GVPI)}}
  $$
  \item[\rm{(ii)}] The diameter of $(X,b)$ w.r.t. $r$ is finite, i.e.,
  $$
  \diam_r(X)=\sup\set{r(x,y)}{x,y\in X}<\infty .
  $$
  \item[\rm{(iii)}] The graph  $(X,b)$ satisfies
  $$
  \cD\subset \ell^\infty(X) .
  $$
 \end{itemize}
Moreover, the best constant $C_P$ in {\rm (GVPI)} equals $\diam_r(X)$ and the
square norm $\| J\|^2$ of the inclusion map
$$
J: \bigslant{\cD}{\RR\cdot 1}\to \bigslant{\ell^\infty}{\RR\cdot 1}
$$
of quotients modulo the constant functions $\RR\cdot 1$.
\end{prop}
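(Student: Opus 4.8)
The plan is to prove the cycle of implications (iii) $\Rightarrow$ (ii) $\Rightarrow$ (i) $\Rightarrow$ (iii), and then extract the statement about best constants along the way. The key reformulation to keep in mind throughout is that $r(x,y) = \rho(x,y)^2$ with $\rho(x,y) = \sup\{f(x)-f(y) \mid \en(f)\le 1\}$, which is exactly the best constant in (MPI). Thus $r(x,y)$ is itself the smallest constant making $(f(x)-f(y))^2 \le r(x,y)\en(f)$ hold for all $f\in\cD$.

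For (iii) $\Rightarrow$ (ii): assuming $\cD\subset\ell^\infty(X)$, I would show that $\rho$ is finite on $X\times X$ by a uniform boundedness / closed graph argument. The space $\cD/\RR\cdot 1$ carries the norm $\sqrt{\en(\cdot)}$ (using connectedness, $\en(f)=0$ forces $f$ constant, so this is genuinely a norm on the quotient), and one checks it is complete — $\en$ is a closed form, which is part of the standing form-theoretic background. The hypothesis (iii) says each evaluation-difference functional $f\mapsto f(x)-f(y)$ is finite on $\cD/\RR\cdot 1$; since these are limits (pointwise) of the bounded functionals coming from truncations, they are closed, hence bounded, functionals, so $\rho(x,y)<\infty$ for each pair. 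To upgrade to a uniform bound over all pairs $(x,y)$, note that $\diam_r(X) = \|J\|^2$ where $J$ is the inclusion of quotients; (iii) says $J$ is everywhere defined, the graph of $J$ is closed (pointwise limits again, using that $\|\cdot\|_{\mathcal V}$-convergence implies pointwise convergence modulo constants), so $J$ is bounded by the closed graph theorem, giving $\diam_r(X)=\|J\|^2<\infty$. This simultaneously proves (ii) and pins down $\|J\|^2 = \diam_r(X)$.

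For (ii) $\Rightarrow$ (i): if $\diam_r(X) = \sup_{x,y} r(x,y) = c < \infty$, then for any $f\in\cD$ and any $x,y$ we have $(f(x)-f(y))^2 \le r(x,y)\en(f)\le c\,\en(f)$. Taking the supremum over $x$ and infimum over $y$ (or just sup over both) gives $\varn{f}^2 = (\sup f - \inf f)^2 \le c\,\en(f)$, which is (GVPI); in particular $f$ is bounded, so this also yields (i) $\Rightarrow$ (iii) for free once (i) is known, but I will close the loop cleanly anyway. This argument also shows the best constant $C_P$ in (GVPI) satisfies $C_P \le \diam_r(X)$. For the reverse inequality $C_P \ge \diam_r(X)$: given $x,y$ and $\varepsilon>0$, pick $f$ with $\en(f)\le 1$ and $f(x)-f(y) > \rho(x,y)-\varepsilon$; then $\varn{f} \ge f(x)-f(y)$, so $C_P \ge \varn{f}^2/\en(f) \ge (\rho(x,y)-\varepsilon)^2$, and letting $\varepsilon\to 0$ and taking the sup over $x,y$ gives $C_P \ge \diam_r(X)$. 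Hence $C_P = \diam_r(X)$.

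For (i) $\Rightarrow$ (iii): (GVPI) directly gives $\varn{f} \le \sqrt{c\,\en(f)}<\infty$ for all $f\in\cD$, and since $\varn{f} = \sup f - \inf f<\infty$ means $f$ has finite oscillation, $f$ is bounded (it takes a finite value somewhere — it is a real-valued function on $X$ — and its range has finite diameter), so $f\in\ell^\infty(X)$. This completes the cycle. The main obstacle I anticipate is the (iii) $\Rightarrow$ (ii) step, specifically justifying the closed graph / uniform boundedness argument rigorously: one must be careful that $\cD/\RR\cdot 1$ equipped with $\sqrt{\en}$ is a Banach space (closedness of the form $\en$, which is standard but should be cited from the Appendix) and that pointwise evaluation modulo constants behaves well under $\|\cdot\|_{\mathcal V}$-limits so that the relevant graphs really are closed. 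Everything else is elementary manipulation of the definitions of $\rho$, $r$, and $\varn{\cdot}$.
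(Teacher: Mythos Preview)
Your proposal is correct and takes essentially the same route as the paper: the paper declares (i)\,$\Leftrightarrow$\,(ii) ``rather obvious'' and obtains (iii)\,$\Rightarrow$\,(i) via the closed graph theorem applied to the inclusion $J$ together with the identity $\varn{f}=2\inf_{t}\|f-t\cdot 1\|_\infty$, which is exactly your strategy rearranged into a cycle. One small point: the first half of your (iii)\,$\Rightarrow$\,(ii) step (arguing that each $\rho(x,y)$ is finite) is unnecessary and does not actually use (iii), since pointwise finiteness of $\rho$ already follows from Proposition~\ref{prop-basic}; only the \emph{uniform} bound $\sup_{x,y}\rho(x,y)<\infty$ genuinely needs the closed graph argument on $J$.
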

The equivalence of (i) and (ii) is rather obvious; the equivalence of (i) and
(iii) follows from a closed
graph theorem together with the observation:
\begin{eqnarray*}
 \varn{f}&=&\sup_{s\in[\inf f,\sup f]}\| f -s\cdot 1\|_\infty\\\nonumber
 &=& 2\inf_{t\in\RR}\| f -t\cdot 1\|_\infty
\end{eqnarray*}
See, again, \cite{LSS-arxiv} for details. Now, we can easily deduce the
following optimal Poincar\'e inequality, where we
write $\cP(X)$ for the set of all probability measures on $X$.
\begin{theorem}
 Let $(X,b)$ satisfy {\rm (GVPI)}. Then the best possible   $C_P$ in {\rm
(GVPI)} satisfies:
 $$
 \frac{4}{C_P}=\inf\set{\lae(H(X,b,m))}{m\in \cP(X)\mbox{  \rm{s.t}
}\supp(m)=X} .
 $$
\end{theorem}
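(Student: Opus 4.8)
The plan is to prove the two inequalities $\frac{4}{C_P} \le \inf\{\lae(H(X,b,m))\}$ and $\frac{4}{C_P} \ge \inf\{\lae(H(X,b,m))\}$ separately, exploiting the identification $C_P = \diam_r(X)$ from the previous proposition and the fact that $r = \rho^2$ with $\rho(x,y) = \sup\{f(x)-f(y) \mid \en(f)\le 1\}$.

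For the inequality $\lae(H(X,b,m)) \ge \frac{4}{C_P}$ for every $m \in \cP(X)$, I would run the argument of Theorem \ref{lb-fin-N} but with the pseudo-metric $r$ in place of $d$: combine (MPI) for $r$, i.e. $(f(x)-f(y))^2 \le r(x,y)\en(f) \le \diam_r(X)\en(f) = C_P \en(f)$, with the elementary inequality (the second displayed proposition in Section \ref{sec-finite-volume}) which gives, for $f \perp 1$ in $\ell^2(X,m)$ with $m$ a probability measure, $\|f\|^2 \le \frac14 \sup_{x,y}(f(x)-f(y))^2 \cdot m(X) = \frac14 \sup_{x,y}(f(x)-f(y))^2$. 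Chaining these yields $\|f\|^2 \le \frac{C_P}{4}\en(f)$, hence $\lae(X) \ge \frac{4}{C_P}$; taking the infimum over $m$ preserves this. Note this direction uses only that $m$ is a probability measure, not that $\supp(m) = X$.

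For the reverse inequality $\frac{4}{C_P} \ge \inf_m \lae(H(X,b,m))$, the idea is to choose the measure $m$ cleverly so that $\lae(H(X,b,m))$ comes arbitrarily close to $\frac{4}{C_P} = \frac{4}{\diam_r(X)}$. Pick $x_0, y_0 \in X$ with $r(x_0,y_0)$ close to $\diam_r(X)$, and let $g \in \cD$ be a near-optimizer of $\rho(x_0,y_0)$, i.e. $\en(g) \le 1$ and $g(x_0) - g(y_0) \approx \rho(x_0,y_0) = \sqrt{\diam_r(X)}$ (up to $\varepsilon$). By subtracting a constant and noting $g$ is bounded (as $\cD \subset \ell^\infty$ under (GVPI)), we may normalize so that $g$ "nearly saturates" the variation seminorm. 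Now take $m = m_\varepsilon$ to be a probability measure concentrating most of its mass near the two points $x_0$ and $y_0$ where $g$ attains (approximately) its sup and inf — for instance $m \approx \frac12(\delta_{x_0} + \delta_{y_0})$, smeared slightly so that $\supp(m) = X$ and $m$ charges every vertex, with the stray mass $\to 0$. Testing the Rayleigh quotient with $f = g - \langle g, 1\rangle_m 1$ (which is $\perp 1$ in $\ell^2(X,m)$), one computes $\en(f) = \en(g) \le 1$ while $\|f\|^2_m \to \frac14(\sup g - \inf g)^2 = \frac14 \varn{g}^2 \ge \frac14(g(x_0)-g(y_0))^2 \approx \frac14 \diam_r(X)$, using that as $m$ concentrates at the two extremal points, $f$ takes values close to $\pm\frac12\varn{g}$ there. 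Hence $\lae(H(X,b,m)) \le \en(f)/\|f\|^2_m \to \frac{4}{\diam_r(X)} = \frac{4}{C_P}$, so the infimum is $\le \frac{4}{C_P}$.

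The main obstacle is the reverse direction, specifically controlling the normalization and the concentration of $m$ simultaneously: one must ensure that $g$ can be chosen genuinely bounded with $\sup g$ and $\inf g$ essentially realized (or approached) at two specific points $x_0, y_0$, while the probability measure $m_\varepsilon$ both concentrates its mass at those two points (to push $\|f\|_m^2$ up to $\frac14\varn{g}^2$) and still has full support (a constraint that is harmless since the leftover mass can be taken arbitrarily small and the values of $g$ elsewhere are bounded). A clean way to handle the "$\sup$/$\inf$ attained at two points" issue is to first observe, via the computation $\varn{f} = 2\inf_t \|f - t\cdot 1\|_\infty$ already recorded, that it suffices to have two points where $g$ is within $\varepsilon$ of $\sup g$ and $\inf g$ respectively; such points always exist, and putting mass $\frac12 - \varepsilon$ near each and spreading the remaining $2\varepsilon$ over all of $X$ does the job. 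I would also double-check the edge case where $\cD/\RR\cdot 1$ might be one-dimensional or where the sup/inf of $g$ differ from $g(x_0), g(y_0)$ — but under (GVPI) the diameter bound $\diam_r(X) = C_P$ ties everything together and these degeneracies do not cause trouble.
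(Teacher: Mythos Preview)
The paper does not actually supply a proof of this theorem: after the preceding proposition it simply writes ``Now, we can easily deduce the following optimal Poincar\'e inequality'' and states the result, implicitly deferring to \cite{LSS-TPI}. Your proposal fills in exactly the argument the surrounding text is pointing at, and it is correct.

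For the inequality $\lae(H(X,b,m))\ge 4/C_P$ you combine the metric Poincar\'e inequality for $r$ (equivalently, (GVPI) itself) with the variance proposition $\|f\|^2\le\tfrac14\varn{f}^2\,m(X)$ for $m(X)=1$; this is precisely the mechanism behind Theorem~\ref{lb-fin-N} run with $r$ in place of $d$, which is what the paper's phrase ``easily deduce'' is alluding to. The only point worth making explicit is that the variance proposition needs $f$ bounded, and this is guaranteed by (iii) of the preceding proposition ($\cD\subset\ell^\infty$) once (GVPI) holds.

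For the reverse inequality your concentration idea is the right one and is cleanly executable: pick $x_0,y_0$ with $r(x_0,y_0)>C_P-\varepsilon$, pick $g$ with $\en(g)\le 1$ and $(g(x_0)-g(y_0))^2>C_P-\varepsilon$, and take $m_\delta=(\tfrac12-\delta)\delta_{x_0}+(\tfrac12-\delta)\delta_{y_0}+2\delta\mu$ for a fixed fully supported $\mu\in\cP(X)$. Testing with $f=g-\langle g,1\rangle_{m_\delta}$ gives $\en(f)\le 1$ and, since $g\in\ell^\infty$, $\|f\|_{m_\delta}^2\to\tfrac14(g(x_0)-g(y_0))^2$ as $\delta\to 0$. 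Note that you do not need the detour through $\varn{g}$ or the location of $\sup g,\inf g$ that you worry about in the last paragraph: the quantity $(g(x_0)-g(y_0))^2$ is already within $\varepsilon$ of $C_P$ by construction, so the Rayleigh quotient is at most $4/(C_P-\varepsilon)+o(1)$ directly. The only hidden hypothesis is that a fully supported $\mu\in\cP(X)$ exists, i.e.\ that $X$ is countable; this is implicit in the statement (otherwise the set on the right is empty).
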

Hence we have:
\begin{corollary}
  Let $(X,b)$ be a graph and $m:X\to(0,\infty)$ such that $\vol(X)<\infty$. Then
  $$
  \lae(H(X,b,m))\geq \frac{4}{\diam_r(X)\cdot\vol(X)}
  $$
  and the estimate is optimal.
\end{corollary}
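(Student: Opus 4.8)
The plan is to run the proof of Theorem~\ref{lb-fin-N} verbatim, with the path metric $d$ replaced throughout by the resistance metric $r$ and $\diam$ by $\diam_r$. First I would dispose of the trivial case: if $\diam_r(X)=\infty$ the asserted inequality is just $\lae(H(X,b,m))\ge 0$, so I may assume $\diam_r(X)<\infty$, which by the Proposition characterising {\rm (GVPI)} is equivalent to {\rm (GVPI)} and in particular forces $\cD\subset\ell^\infty(X)$, i.e. every $f$ with $\en(f)<\infty$ is bounded. Then, given an admissible $f\in\ell^2(X,m)$ for $\lae$ (that is $\en(f)<\infty$, $\langle f,1\rangle=0$, $\|f\|=1$), I would apply the finite measure space Proposition stated just before Theorem~\ref{lb-fin-N} to $Y=X$ with $\mu=m$: this is legitimate because $\mu(X)=\vol(X)<\infty$, $f$ is bounded, and ``$f\perp 1$'' is precisely $\langle f,1\rangle=0$. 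That yields $\|f\|^2\le\frac14\sup_{x,y\in X}(f(x)-f(y))^2\,\vol(X)$, and combining with $(f(x)-f(y))^2\le r(x,y)\,\en(f)\le\diam_r(X)\,\en(f)$ (the {\rm (MPI)} satisfied by $r$, whose best constant is $r(x,y)$) gives $1\le\frac14\diam_r(X)\,\vol(X)\,\en(f)$. Taking the infimum over admissible $f$ proves the lower bound.

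For optimality, the plan is to reduce to the preceding Theorem by rescaling the measure; again I may assume $\diam_r(X)<\infty$ so that {\rm (GVPI)} holds and that Theorem applies. The key point is the scaling identity $\lae(H(X,b,\lambda m))=\lambda^{-1}\lae(H(X,b,m))$ for $\lambda>0$, which holds because $\en$ does not involve the measure, while under $m\mapsto\lambda m$ the constraint $\langle f,1\rangle=0$ is unchanged and the normalisation rescales (substitute $g=\sqrt{\lambda}\,f$ in the variational formula for $\lae$). Fixing $V:=\vol(X)$, every $m:X\to(0,\infty)$ with that total volume equals $V\mu$ for a probability measure $\mu$ with $\supp(\mu)=X$ (full support being automatic for strictly positive $m$), so $\lae(H(X,b,m))=V^{-1}\lae(H(X,b,\mu))$. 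Taking the infimum over all such $m$ and invoking the preceding Theorem together with $C_P=\diam_r(X)$ gives $\inf\lae=V^{-1}\cdot\frac{4}{\diam_r(X)}=\frac{4}{\diam_r(X)\,V}$, which coincides with the lower bound just proved, so the constant cannot be improved. (If $\diam_r(X)=\infty$ the statement is vacuous and needs no optimality discussion.)

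The main thing to watch — rather than a genuine obstacle — is the interface with the abstract finite measure space Proposition: one must first record, via the {\rm (GVPI)} characterisation, that finite-energy functions lie in $\ell^\infty(X)$ so that that Proposition is even applicable, and then observe that with $\mu=m$ its $L^2(\mu)$-norm and its ``$f\perp 1$'' hypothesis become exactly the ambient $\|\cdot\|$ and the constraint in the variational description of $\lae$. For optimality the only subtlety is reading the claim with the total volume held fixed, so that the scaling identity reduces everything to the already-proved Theorem; the scaling identity itself is a one-line check. I do not expect a hard step.
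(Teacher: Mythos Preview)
Your proposal is correct and essentially follows the paper's intended route. The paper derives the Corollary immediately from the preceding Theorem (``Hence we have:''), using that $C_P=\diam_r(X)$ and the scale-invariance of $\lae\cdot\vol(X)$; your optimality argument is exactly this derivation, and your direct proof of the lower bound (via the MPI for $r$ and the finite measure space Proposition) just re-does, with slightly more detail, the easy half of that Theorem rather than quoting it.
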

It is not too hard to see that $d=r$ in case that $(X,b)$ is a
locally finite tree, see \cite{GHKLW},  which means that we cannot
do better than in Theorem \ref{lb-fin-N}.

\section{Appendix: Forms and associated
operators, domains and spectra}\label{no-fear} In this section we
present the operator theoretic background for our discussion.

We first mention that
$$
\en(f):=
\frac12 \sum_{x,y\in
X}b(x,y)(f(x)-f(y))^2
$$
defines a closed form on its maximal domain
$$
\dom(\en)={\cal D}\cap \ell^2(X,m)=\{ f\in \ell^2(X,m)\mid
\en(f)<\infty\} .
$$
In fact, $\en$ can be regarded as the sum of the bounded forms
$\en_{x,y}(f)=\frac{1}{2}b(x,y)(f(x)-f(y))^2$ and so is lower
semicontinuous; an appeal to \cite{RS-1}, Theorem S.18 gives the
closedness.

Of course, in the general situation considered here, the domain
$\dom(\en)$ is not necessarily dense in $\ell^2(X,m)$. However, for
$$
\cal{H}_{\en}:=\overline{\dom(\en)},
$$
the closure in $\ell^2(X,m)$, is a Hilbert space and $\left( \en,
\dom(\en)\right)$ defines a closed, densely defined form on
$\cal{H}_{\en}$. By the form representation theorem, Thm VIII.15 in
\cite{RS-1}, there is a unique self adjoint operator $H=H^N(X,b,m)$
in $\cal{H}_{\en}$ that is associated to this form. In analogy with
the continuum situation we call this operator Neumann Laplacian as
it is associated with the maximal form.

\begin{prop}
 Let $(X,b,m)$ be as above and, additionally, $m(X)<\infty$. Then
 \begin{itemize}
  \item [\rm{(a)}]  The function $1$ belongs to $\dom(H)$ with $H 1
  = 0$ and $0$ is  an eigenvalue of multiplicity
one,
  $$
  \inf \sigma(H)=0 .
  $$
  \item [\rm{(b)}]
  $$
  \lae =\inf \sigma(H)\setminus \{0\} .
  $$
  If, furthermore $\diam(X)<\infty$, the latter is an eigenvalue.
 \end{itemize}
\end{prop}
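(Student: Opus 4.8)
The plan for (a) is to read everything off the form representation theorem recalled just above. Since $m(X)<\infty$ the constant function $1$ lies in $\ell^2(X,m)$, and $\en(1)=0<\infty$, so $1\in\dom(\en)\subset\mathcal H_\en$. For every $g\in\dom(\en)$ one has $\en(1,g)=\frac12\sum_{x,y}b(x,y)(1-1)(g(x)-g(y))=0=\langle 0,g\rangle$, hence $1\in\dom(H)$ with $H1=0$, so $0$ is an eigenvalue; and since $\en\ge 0$ the operator $H$ is non-negative, whence $\inf\sigma(H)=0$. For the multiplicity statement one shows $\ker H=\RR\cdot 1$: if $Hf=0$ then $\en(f)=\langle Hf,f\rangle=0$, which forces $f(x)=f(y)$ whenever $b(x,y)>0$, and connectedness of $(X,b)$ then forces $f$ to be constant.

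For the identity $\lae(X)=\inf\sigma(H)\setminus\{0\}$ in (b), the idea is to exploit that $\RR\cdot 1$, being spanned by an eigenvector of the self-adjoint operator $H$, is a reducing subspace. Let $H'$ be the restriction of $H$ to $\mathcal K:=(\RR\cdot 1)^\perp\subset\mathcal H_\en$. Since $\ker H=\RR\cdot 1$, the spectral projection $E_H(\{0\})$ is exactly the projection onto $\RR\cdot 1$, so $\mathcal K=E_H((0,\infty))\mathcal H_\en$ and $\inf\sigma(H')=\inf(\sigma(H)\setminus\{0\})$ regardless of whether $0$ is isolated in $\sigma(H)$. The closed form associated with $H'$ is $\en$ restricted to $\dom(\en)\cap\mathcal K=\{f\in\dom(\en):\langle f,1\rangle=0\}$, so the variational description of the bottom of the spectrum gives
$$\inf\sigma(H')=\inf\Bigl\{\en(f)/\|f\|^2 : f\in\ell^2(X,m),\ \en(f)<\infty,\ \langle f,1\rangle=0,\ f\neq 0\Bigr\},$$
and the right-hand side is $\lae(X)$ by definition.

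It remains to show that, if in addition $\diam(X)<\infty$, the value $\lae(X)$ is attained. I would prove this by showing $H$ has compact resolvent, which follows once the form domain $\dom(\en)$, equipped with the graph norm $(\en(f)+\|f\|^2)^{1/2}$, embeds compactly into $\ell^2(X,m)$; indeed Theorem \ref{lb-fin-N} then gives $\lae(X)\ge 4/(\diam(X)\vol(X))>0$, so $\lae(X)$ is the minimum of the purely discrete set $\sigma(H)\setminus\{0\}$ and hence an eigenvalue. For the compact embedding, note first that $m(X)<\infty$ forces $X$ to be countable. Given $f\in\dom(\en)$ with $\en(f)+\|f\|^2\le 1$, write $f=c\cdot 1+g$ with $\langle g,1\rangle=0$; then $c^2\vol(X)\le\|f\|^2\le 1$, while Proposition \ref{prop-basic} yields $(\sup g-\inf g)^2\le\diam(X)\,\en(g)\le\diam(X)$, and since $g$ has mean zero this produces a uniform pointwise bound $|f(x)|\le M$ with $M$ depending only on $\diam(X)$ and $\vol(X)$. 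A diagonal argument over the countable vertex set extracts from any such family a pointwise convergent subsequence, and dominated convergence (dominating function $2M\cdot 1\in\ell^2(X,m)$) upgrades this to convergence in $\ell^2(X,m)$.

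The step I expect to be the main obstacle is this last one: setting up the compact embedding cleanly, in particular separating the constant part of $f$ from its oscillation before invoking Proposition \ref{prop-basic} and dominated convergence, and checking the (harmless) reduction to a countable vertex set. Everything else is routine form theory combined with connectedness of the graph.
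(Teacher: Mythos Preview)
Your proof is correct. Parts (a) and the identity in (b) match the paper's argument essentially line for line; the paper is just terser and phrases the latter as ``follows from the min-max principle'' where you spell out the reducing-subspace decomposition.

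The genuine difference is in the final claim of (b). The paper does not prove compact resolvent directly: it cites \cite{GHKLW} (see also \cite{LSS-TPI}) for the fact that finite diameter with respect to $d$ makes the graph \emph{canonically compactifiable}, which in turn forces $H$ to have compact resolvent. You instead give a self-contained proof of the compact embedding $\dom(\en)\hookrightarrow\ell^2(X,m)$: split off the constant part, use Proposition~\ref{prop-basic} to uniformly bound the oscillation of the mean-zero remainder, extract a pointwise convergent subsequence by a diagonal argument on the countable vertex set, and upgrade via dominated convergence. This is precisely the mechanism behind the cited result, so your argument is not a different idea so much as an unpacking of what ``canonically compactifiable'' buys here; the advantage is that your write-up is independent of the external references. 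One minor point: once compact resolvent is established the spectrum is purely discrete and $0$ is automatically isolated, so your appeal to Theorem~\ref{lb-fin-N} to get $\lae(X)>0$ is redundant (though harmless).
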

\begin{proof}
 Ad (a): Since $m(X)$ is finite, $1\in\den$ and $\en(1)=0$. Therefore, $H1=0$.
As,
by our standing assumption,
 $(X,b,m)$ is connected, any element in the kernel of $H$ has to be constant, so
the multiplicity of the eigenvalue $0$ is one. It is the bottom of the
spectrum, since $H\ge 0$.

 Ad (b): This follows from the min-max principle, see, e.g., Theorem XIII.2  in
\cite{RS-4}.
 In case that the diameter is finite, the graph is canonically compactifiable
according to \cite{GHKLW},
 see also \cite{LSS-TPI},
 and hence $H$ has compact resolvent, so $\lae$ is the first non-zero
eigenvalue in this case.
\end{proof}

Many of our results deal with the \emph{Dirichlet Laplacian} $H_\Omega$, where
$\Omega\subset X$ is a subset and we imagine the Dirichlet boundary
condition on $D:=X\setminus \Omega$ as given by an infinite potential barrier.
Therefore we get the form
$$
\en_\Omega(\cdot,\cdot)=\en(\cdot,\cdot)\mbox{  on
}\dom(\en_\Omega)=\{ f\in\dom(\en)  \mid f=0\mbox{ on
}D\} .
$$

 We identify $\ell^2(\Omega,m)$ with $\{ f\in\ell^2(X,m)\mid
f=0\mbox{ on }D\}$ and get an associated selfadjoint operator
$H_\Omega$ living in a subspace of $\ell^2(\Omega,m)$ in analogy to
what we saw for the Neumann Laplacian  above. Note that $\en_\Omega$
and $H_\Omega$ are always to be understood relative to the bigger
ambient graph $X$.

\begin{prop}
 Let $(X,b,m)$ be as above and $\Omega\subset X$. Then
$$
\lao = \inf \sigma (H_\Omega) .
$$
 \end{prop}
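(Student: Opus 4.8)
\emph{Proof of the Proposition.} The plan is to show that both $\lao$ and $\inf\sigma(H_\Omega)$ equal one and the same variational quantity, namely the infimum of the Rayleigh quotient $\en_\Omega(f)/\|f\|^2$ over the form domain of $\en_\Omega$. First I would unwind the definition of $\lao$. By construction $\en_\Omega(f)=\en(f)$ for every $f\in\dom(\en_\Omega)$, and $\dom(\en_\Omega)$ consists precisely of those $f\in\ell^2(X,m)$ with $\en(f)<\infty$ that vanish on $D=X\setminus\Omega$; recall that $\ell^2(\Omega,m)$ is identified with this space. Hence the defining formula for $\lao$ can be rewritten as
$$
\lao=\inf\left\{\en_\Omega(f)\mid f\in\dom(\en_\Omega),\ \|f\|=1\right\}=\inf_{0\neq f\in\dom(\en_\Omega)}\frac{\en_\Omega(f)}{\|f\|^2},
$$
with the usual convention that the infimum over the empty set is $+\infty$. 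This covers the degenerate case $\dom(\en_\Omega)=\{0\}$, in which $\mathcal{H}_{\en_\Omega}=\{0\}$, the operator $H_\Omega$ acts on the trivial space, $\sigma(H_\Omega)=\emptyset$, and both sides equal $+\infty$; so from now on we may assume $\dom(\en_\Omega)\neq\{0\}$.

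Next I would recall, from the discussion preceding the statement, that $\en_\Omega$ is a closed, nonnegative, densely defined form on the Hilbert space $\mathcal{H}_{\en_\Omega}=\overline{\dom(\en_\Omega)}$ (closure taken in $\ell^2(\Omega,m)$), and that $H_\Omega$ is by definition the nonnegative self-adjoint operator on $\mathcal{H}_{\en_\Omega}$ associated with it. Since $H_\Omega$ is bounded below, the spectral theorem gives that the infimum of its numerical range equals the bottom of its spectrum, i.e.
$$
\inf\sigma(H_\Omega)=\inf_{0\neq f\in\dom(H_\Omega)}\frac{\langle H_\Omega f,f\rangle}{\|f\|^2}
$$
(see \cite{RS-1}). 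It remains to pass from the operator domain to the form domain. One inequality is immediate: since $\dom(H_\Omega)\subset\dom(\en_\Omega)$ and $\langle H_\Omega f,f\rangle=\en_\Omega(f)$ there, the infimum over $\dom(\en_\Omega)$ is no larger than the infimum over $\dom(H_\Omega)$, so $\lao\le\inf\sigma(H_\Omega)$. For the reverse inequality one uses that $\dom(H_\Omega)$ is a form core for $\en_\Omega$ (part of the form representation theorem): given $f\in\dom(\en_\Omega)$, pick $f_n\in\dom(H_\Omega)$ with $f_n\to f$ in the form norm; then $\en_\Omega(f_n)\to\en_\Omega(f)$ and $\|f_n\|\to\|f\|$, so $\en_\Omega(f)/\|f\|^2$ is a limit of Rayleigh quotients of elements of $\dom(H_\Omega)$ and therefore bounded below by $\inf\sigma(H_\Omega)$. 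Taking the infimum over $f\in\dom(\en_\Omega)$ yields $\lao\ge\inf\sigma(H_\Omega)$, and the two inequalities together prove the claim. (Alternatively one may simply quote the min-max principle in its form version, \cite{RS-4}, Theorem XIII.2, applied to $H_\Omega$ on $\mathcal{H}_{\en_\Omega}$.)

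There is no serious obstacle here; the result is routine form theory. The only points that warrant a little care are the bookkeeping around the possibly non-dense form domain — which is handled throughout by working on $\mathcal{H}_{\en_\Omega}$ rather than on all of $\ell^2(\Omega,m)$, exactly as in the construction of $H_\Omega$ — and the degenerate case $\dom(\en_\Omega)=\{0\}$, where both sides equal $+\infty$ by the stated conventions. $\qed$
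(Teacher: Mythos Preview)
Your argument is correct and is essentially a spelled-out version of the paper's one-line proof, which simply appeals to the min-max principle (\cite{RS-4}, Theorem~XIII.2). Your final parenthetical remark is in fact the entire content of the paper's proof; the preceding paragraphs just unpack what that citation means in the present setting, including the care you take with the possibly non-dense form domain and the degenerate case.
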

This, again, is a consequence of the min-max principle.

\begin{rem}  Another natural choice for the relevant forms would be to
consider what might be thought of as Dirichlet boundary conditions
at infinity, given by the form domain
$$
\dom\left(\en^{D,\infty}\right):=\overline{\{ f\in {\cal D}\mid
\supp(f)\mbox{ is a finite set}\}}^\en ,
$$
where the support  of f is given by $\supp (f):=\{ x\in X\mid
f(x)\not=0\}$ and the closure is meant with respect to the form norm
given by the energy. Then, the restriction
$$
\en^{D,\infty}:=\en\rvert_{\dom\left(\en^{D,\infty}\right)}
$$
is a closed form. We do not study the associated operator
$H^{D,\infty}$ here but only mention that it is bounded below by the
Neumann Laplacian. So, our estimates hold for this operator as well.
Similar consideration apply to the restriction to $\Omega$.
\end{rem}

{\small

}

\end{document}